 \patchcmd\Gread@eps{\@inputcheck#1 }{\@inputcheck"#1"\relax}{}{}
\patchcmd\Gread@eps{\@inputcheck#1 }{\@inputcheck"#1"\relax}{}{}
 \patchcmd\Gread@eps{\@inputcheck#1 }{\@inputcheck"#1"\relax}{}{}
\newcommand{\intav}[1]{\mathchoice {\mathop{\vrule width 6pt height 3 pt depth  -2.5pt
\kern -8pt \intop}\nolimits_{\kern -6pt#1}} {\mathop{\vrule width
5pt height 3  pt depth -2.6pt \kern -6pt \intop}\nolimits_{#1}}
{\mathop{\vrule width 5pt height 3 pt depth -2.6pt \kern -6pt
\intop}\nolimits_{#1}} {\mathop{\vrule width 5pt height 3 pt depth
-2.6pt \kern -6pt \intop}\nolimits_{#1}}}
\def\polhk#1{\setbox0=\hbox{#1}{\ooalign{\hidewidth\lower1.5ex\hbox{`}\hidewidth\crcr\unhbox0}}}
\newcommand{\tr}{\operatorname{Tr}}
\renewcommand{\dim}{\operatorname{dim}}
\newcommand{\llip}{\operatorname{Log-Lip}}
\newtheorem{teo}{Theorem}
\newtheorem{Proposition}{Proposition}
\newtheorem{Remark}{Remark}
\newtheorem{Assumption}{A}
\begin{document}

\title{A note on the density of the partial regularity result in the class of viscosity solutions}
\author{Disson dos Prazeres, Edgard A. Pimentel and Giane C. Rampasso}

\date{\today} 

\maketitle

\begin{abstract}

\noindent We establish the density of the partial regularity result in the class of continuous viscosity solutions. Given a fully nonlinear equation, we prove the existence of a sequence entitled to the partial regularity result, approximating its solutions. Distinct conditions on the operator driving the equation lead to density in different topologies. Our findings include applications to inhomogeneous problems, with variable-coefficients models.
\medskip

\noindent \textbf{Keywords}:  Regularity theory; Partial regularity result; approximating sequence.

\medskip 

\noindent \textbf{MSC(2010)}: 35B65; 35J60; 35J70; 49N60; 49J45.
\end{abstract}

\vspace{.1in}

\section{Introduction}\label{sec_introduction}
In this note we examine the density of the \emph{partial regularity result} in the class $S(\lambda,\Lambda)$ of continuous viscosity solutions to 
\begin{equation}\label{eq_main}
	F(D^2u)\,=\,0\;\;\;\;\;\mbox{in}\;\;\;\;\;B_1,
\end{equation}
where $F:\mathcal{S}(d)\to\mathbb{R}$ is merely a $(\lambda,\Lambda)$-elliptic operator.

Given a viscosity solution $u\in\mathcal{C}(B_1)$ to \eqref{eq_main}, we prove the existence of a sequence $(u_n)_{n\in\mathbb{N}}$, for which the partial regularity result is available, converging to $u$. Different modes of convergence are examined, under distinct assumptions on $F$. We also consider the case of inhomogeneous problems, in the presence of variable coefficients. Our main contribution in this note is to implement a mollification strategy for $F$. Our analysis is motivated by the class of results put forward in \cite{CafSil10}.

The regularity theory for fully nonlinear elliptic problems occupies a prominent role in the analysis of partial differential equations (PDE).  An important development in this realm stems from the works of N. Krylov and M. Safonov, and Trudinger; see \cite{Krylov-Safonov_1980, Trudinger_1989}; see also \cite{Caf89} and \cite[Chapter 8.2]{CafCab95}. It ensures that if $u$ is a viscosity solution to \eqref{eq_main}, then $u\in\mathcal{C}^{1,\alpha_0}_{loc}(B_1)$, for some $\alpha_0\in(0,1)$ unknown. In addition, the appropriate estimates are available. 
This result is also understood as a fully nonlinear counterpart of the De Giorgi-Nash-Moser theory.

If ellipticity is combined with convexity of the operator $F$, further levels of regularity are unveiled. This is the content of the Evans-Krylov theory \cite{Evans_1982,Krylov_1982}. Under these assumptions, solutions to \eqref{eq_main} are actually of class $\mathcal{C}^{2,\alpha}$, locally, with estimates. Once again, $\alpha\in(0,1)$ is unknown. Independently developed by L.C. Evans and N. Krylov, this corpus of results unlocks a set of conditions under which \emph{classical solutions} are available.

In \cite{Caf89}, L. Caffarelli examines viscosity solutions to 
\begin{equation}\label{eq_main2}
	F(D^2u,x)\,=\,f\;\;\;\;\;\mbox{in}\;\;\;\;\;B_1.
\end{equation}
The fundamental idea in that paper is to relate \eqref{eq_main2} with the homogeneous equations driven by the fixed-coefficients counterpart of $F(M,x)$. Under natural assumptions on the oscillation of the operator and the integrability of the source term, the author develops a regularity theory in H\"older and Sobolev spaces. See \cite{CafCab95}. For related developments, see also \cite{Escauriaza_1993,Swiech_1997,Winter_2009}.

The corpus of results comprised by the $\mathcal{C}^{1,\alpha_0}$-estimates, and the Evans-Krylov and Caffarelli regularity theories suggests a fundamental question.  It regards the highest regularity level expected to hold in the presence of uniform ellipticity alone. In other words, one asks if the $\mathcal{C}^{1,\alpha_0}$-regularity theory is optimal in the absence of further conditions on the operator.

This question was set in the affirmative only recently. In \cite{Nadirashvili-Vladut_2007,Nadirashvili-Vladut_2008,Nadirashvili-Vladut_2011a,Nadirashvili-Vladut_2011b}, N. Nadirashvili and S. Vl\u{a}du\c{t} produced a number of important counterexamples. First, the authors obtained a $(\lambda,\Lambda)$-elliptic equation with solutions whose Hessian matrices blow-up. Moreover, for every $\tau\in(0,1)$, the authors managed to design a $(\lambda,\Lambda)$-elliptic operator $F_\tau$ such that solutions to 
\[
	F_\tau(D^2v)\,=\,0\;\;\;\;\;\mbox{in}\;\;\;\;\;B_1
\]	
lack $\mathcal{C}^{1,\tau}$-regularity. In brief, the former examples imply that ellipticity is not enough to enforce $\mathcal{C}^{1,1}$-regularity. Also, the conjecture that \emph{solutions are of class $\mathcal{C}^{1,\alpha}$ for every $\alpha\in(0,1)$}  is proven false. Surprisingly enough, the Krylov-Safonov regularity theory is optimal if the operator driving the equation is no more than elliptic.

This context motivates the development of methods to unlock further regularity of the solutions. In line with this effort, one finds a number of contributions, as several authors have been working on a variety of directions. We choose to mention two among those.

In \cite{Savin_2007} O. Savin considers an equation of the form
\begin{equation}\label{eq_savin}
	F(D^2u,Du,u,x)\,=\,0\;\;\;\;\;\mbox{in}\;\;\;\;\;B_1
\end{equation}
and impose a few conditions on $F$. In that paper, the operator is supposed to be degenerate elliptic. Also, $F$ is uniformly elliptic in a neighborhood of the origin and verifies $F(0,0,0,x)=0$. In addition, the author assumes $F$ to be twice differentiable, with a modulus of continuity for the Hessian. Under those conditions, there exists a constant $\sigma_0>0$ such that, if a viscosity solution to \eqref{eq_savin} satisfies 
\[
	\left\|u\right\|_{L^\infty(B_1)}\,\leq\,\sigma_0
\]
then $u\in\mathcal{C}^{2,\alpha}_{loc}(B_1)$, with estimates. The $\sigma_0$-smallness condition imposed on the $L^\infty$-norm of the solutions in known as \emph{flatness}. The associated statement is referred to as \emph{flatness implies $\mathcal{C}^{2,\alpha}$}. Of particular interest in \cite{Savin_2007} is the condition on the differentiability of the operator with respect to its Hessian entry.

In \cite{Armstrong-Silvestre-Smart_2012} S. Armstrong, L. Silvestre and C. Smart examine viscosity solutions to 
\[
	F(D^2u)\,=\,0\;\;\;\;\;\mbox{in}\;\;\;\;\;B_1
\]
under differentiability assumptions on $F$. In fact, the authors suppose $F\in\mathcal{C}^1(\mathcal{S}(d))$ with a modulus of continuity for the gradient. Were those conditions in force, solutions would be of class $\mathcal{C}^{2,\alpha}$ in $B_1\setminus\Omega$, with the Hausdorff dimension of $\Omega$ strictly below $d$. If $d=2$, the set where $\mathcal{C}^{2,\alpha}$-regularity fails would be at most a line. 

This advance has been known as \emph{partial regularity result} and represents an important advance in the theory. It addresses nonconvex operators by imposing a (uniform) differentiability condition. The authors resort to some aspects of \cite{Savin_2007} and the measure control for the Hessian put forward in \cite{Lin_1986}.  As a by-product of their analysis, a conjecture on the precise formula of the Fanghua Lin's exponent is stated. We refer the reader to \cite{Lin_1986}; see also \cite[Proposition 7.4]{CafCab95}.

Although stated in fairly general terms, the partial regularity result does not include important toy-models of the literature. An example is the Isaacs equation. For $A:\mathcal{A}\times\mathcal{B}\to\mathcal{S}(d)$, mapping $\mathcal{A}\times\mathcal{B}\ni (\alpha,\beta)\mapsto A_{\alpha,\beta}$, we consider 
\begin{equation}\label{eq_isaacs}
	\sup_{\beta\in\mathcal{B}}\,\inf_{\alpha\in\mathcal{A}}\,\left(-\tr\left(A_{\alpha,\beta}D^2u\right)\right)\,=\,0\;\;\;\;\;\mbox{in}\;\;\;\;\;B_1.
\end{equation}

The Isaacs equation is of utmost relevance in the theory of fully nonlinear elliptic equations. This is partly because a fully nonlinear problem can be formulated in terms of such an equation, for appropriate families of operators; see, for instance, \cite{Cabre-Caffarelli_2003}. In addition, solutions to \eqref{eq_isaacs} are value functions of stochastic two-players, zero-sum, differential games; we refer the reader to \cite{Barron-Evans-Jensen_1984,Evans-Souganidis_1984,Fleming-Souganidis_1989}.

In the present work we produce approximation results for fully nonlinear $(\lambda,\Lambda)$-elliptic operators lacking differentiability. We relate their solutions to sequences $(u_n)_{n\in\mathbb{N}}$ in the class of viscosity solutions $S(\lambda,\Lambda)$. 
For similar results involving nonlocal operators and smooth approximations, see the work of L. Caffarelli and L. Silvestre \cite{CafSil10}. For the density of improved regularity in the class of viscosity solutions, we refer the reader to \cite{PimTei16} and \cite{PimSan18}.

Here we are interested in the following class of results. Suppose $F$ is a $(\lambda,\Lambda)$-elliptic operator and let $u\in\mathcal{C}(B_1)$ be a viscosity solution to \eqref{eq_main}. Although it is not possible to extend the partial regularity result to $u$, we prove the existence of a sequence $(u_n)_{n\in\mathbb{N}}$ converging to $u$ and for which $\mathcal{C}^{2,\alpha}$-regularity is available, except in a subset $\Omega\subset B_1$ of Hausdorff measure strictly less than $d$. Different assumptions on the operator $F$ yield distinct modes of convergence. Ultimately, if one has an interest in properties closed under certain limits, the starting-point of the theory can be, in general, the partial regularity result. Our first main theorem reads as follows:

\begin{teo}[Density in the $\mathcal{C}^{1,\beta}$-topology]\label{teo_main1}
Let $u\in\mathcal{C}(B_1)$ be a viscosity solution to \eqref{eq_main}. Suppose A\ref{assump_Felliptic}, to be detailed further, is in effect. For every $\alpha\in(0,1)$, there exists $\Omega\subset B_1$, a constant $\delta>0$ and a sequence $(u_n)_{n\in\mathbb{N}}$ satisfying
\begin{enumerate}
\item $u_n\in S(\lambda,\Lambda)\cap\mathcal{C}^{2,\alpha}(B_1\setminus \Omega)$, for every $n\in\mathbb{N}$;
\item $u_n\longrightarrow u$ in the $\mathcal{C}^{1,\beta}$-topology, for every $\beta\in(0,\beta_0)$, where $\beta_0\in(0,1)$ is the exponent driving the $\mathcal{C}^{1,\beta_0}$-regularity for $F=0$, and
\item $\dim_\mathcal{H}\Omega\,<\,d\,-\,\delta$.
\end{enumerate}
\end{teo}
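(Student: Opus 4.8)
The plan is to regularize the operator by mollification, to solve the resulting smooth Dirichlet problems with the boundary data inherited from $u$, to apply the partial regularity result of \cite{Armstrong-Silvestre-Smart_2012} to each of them, and finally to pass to the limit through the Krylov--Safonov estimates and the stability of viscosity solutions. The first step is to mollify $F$. Identifying $\mathcal{S}(d)$ with $\mathbb{R}^{d(d+1)/2}$ and fixing a standard sequence of mollifiers $(\eta_j)_{j\in\mathbb{N}}$ on it, I would set
\[
	F_j(M)\,:=\,\int_{\mathcal{S}(d)}F(M+N)\,\eta_j(N)\,dN\;-\;\int_{\mathcal{S}(d)}F(N)\,\eta_j(N)\,dN.
\]
Because $\mathcal{P}^-_{\lambda,\Lambda}(M-M')\le F(M+N)-F(M'+N)\le\mathcal{P}^+_{\lambda,\Lambda}(M-M')$ for every $N$, averaging against $\eta_j\ge0$ keeps each $F_j$ exactly $(\lambda,\Lambda)$-elliptic; in addition $F_j\in\mathcal{C}^\infty(\mathcal{S}(d))$, $F_j(0)=0$, and $F_j\to F$ locally uniformly (here one uses that uniform ellipticity makes $F$ globally Lipschitz, together with the normalization $F(0)=0$ contained in A\ref{assump_Felliptic}). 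In particular each $F_j$ is of class $\mathcal{C}^1$ with a modulus of continuity for its gradient, which is precisely the structural requirement of the partial regularity theory.

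Next, for each $j$, I would let $u_j$ be the unique viscosity solution of $F_j(D^2u_j)=0$ in $B_1$ with $u_j=u$ on $\partial B_1$ --- existence by Perron's method, uniqueness by the comparison principle, both available since $F_j$ is continuous and uniformly elliptic. (If $u$ is merely continuous in the open ball, one runs this construction on $B_r$ with $r<1$ and diagonalizes as $r\uparrow1$; I suppress this routine point.) Since $F_j(D^2u_j)=0=F_j(0)$ and $F_j$ is $(\lambda,\Lambda)$-elliptic, the Pucci inequalities yield $u_j\in S(\lambda,\Lambda)$, so the partial regularity result of \cite{Armstrong-Silvestre-Smart_2012} applies to $u_j$: there is a closed set $\Omega_j\subset B_1$ with $u_j\in\mathcal{C}^{2,\alpha}_{loc}(B_1\setminus\Omega_j)$ and $\dim_{\mathcal{H}}\Omega_j\le d-\varepsilon_0$, where crucially $\varepsilon_0=\varepsilon_0(d,\lambda,\Lambda)>0$ does not depend on $j$ --- the dimension gap descends from Fanghua Lin's measure estimate \cite{Lin_1986}, which only sees ellipticity and dimension.

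Then I would establish the convergence. Since the $u_j$ all lie in $S(\lambda,\Lambda)$ and share the boundary datum $u|_{\partial B_1}$, the maximum principle bounds $\|u_j\|_{L^\infty(B_1)}$ uniformly, the Krylov--Safonov interior estimates \cite{Krylov-Safonov_1980} bound $\|u_j\|_{\mathcal{C}^{1,\beta}(B_r)}$ uniformly for each $r<1$ and each $\beta<\beta_0$, and a barrier at $\partial B_1$ that is uniform in $j$ gives equicontinuity up to the boundary. Along a subsequence, therefore, $u_j\to u_\infty$ in the $\mathcal{C}^{1,\beta}$-topology for every $\beta<\beta_0$. By the stability of viscosity solutions under the locally uniform convergence $F_j\to F$, the limit $u_\infty$ solves $F(D^2u_\infty)=0$ in $B_1$ and equals $u$ on $\partial B_1$; comparison for $F$ then forces $u_\infty=u$, so the whole sequence converges to $u$. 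Finally I would set $\Omega:=\bigcup_j\Omega_j$ and $\delta:=\varepsilon_0/2>0$, and note that $\dim_{\mathcal{H}}\Omega=\sup_j\dim_{\mathcal{H}}\Omega_j\le d-\varepsilon_0<d-\delta$, while $\Omega\supset\Omega_j$ gives $u_j\in\mathcal{C}^{2,\alpha}(B_1\setminus\Omega)$ for every $j$ --- which is the asserted sequence.

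The point I expect to be the main obstacle is keeping everything uniform in $j$ at once: the mollification must preserve the \emph{same} ellipticity constants $\lambda,\Lambda$ (so that the Krylov--Safonov estimates and the Hausdorff-dimension bound $d-\varepsilon_0$ are $j$-independent, and the union $\Omega$ still has dimension strictly below $d$), and one must guarantee that the limit of the $u_j$ is $u$ itself rather than merely \emph{some} solution with the same boundary values --- which is exactly why the construction has to be anchored to the Dirichlet problem and to rest on a comparison principle for the non-differentiable operator $F$.
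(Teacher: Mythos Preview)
Your proposal is correct and follows essentially the same strategy as the paper's own proof: mollify $F$ to obtain smooth $(\lambda,\Lambda)$-elliptic operators $F_j$, solve $F_j(D^2u_j)=0$, apply the partial regularity result of \cite{Armstrong-Silvestre-Smart_2012} to each $u_j$, take $\Omega=\bigcup_j\Omega_j$ (using that the dimension gap depends only on $d,\lambda,\Lambda$), and pass to the limit via the uniform $\mathcal{C}^{1,\beta_0}$ estimates. Your version is in fact more careful than the paper's on one point: by anchoring each $u_j$ to the Dirichlet problem with datum $u|_{\partial B_1}$ and invoking comparison for $F$, you guarantee that the limit is the \emph{given} solution $u$, whereas the paper's proof leaves the boundary conditions for $u_n$ unspecified and simply asserts convergence to $u$.
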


The former theorem states that, although $F$ is not differentiable, $u$ can be approximated in the $\mathcal{C}^{1,\beta}$-topology by functions under the scope of the partial regularity result, in the same viscosity class. Therefore, when studying properties closed under $\mathcal{C}^{1,\beta}$-limits, one can suppose the partial regularity result is available in general. Under further conditions on the operator $F$ the convergence of the approximating sequence takes place in a finer topology.

Indeed, if we suppose that $F$ has a recession profile $F^*$ with $\mathcal{C}^{1,1}$-estimates, it is possible to prove that $u_n\to u$ in the $\mathcal{C}^{1,\llip}$-topology. This occurs whenever $F^*$ is convex/concave. For details on the notion of recession function, we refer the reader to \cite{PimSan18,PimTei16,SilTei15}. Our second result is the following.

\begin{teo}[Density in the $\mathcal{C}^{1,\llip}$-topology]\label{teo_main2}
Let $u\in\mathcal{C}(B_1)$ be a viscosity solution to \eqref{eq_main}. Suppose A\ref{assump_Felliptic}-A\ref{assump_recconvex}, to be detailed further, are in force. For every $\alpha\in(0,1)$, there exists $\Omega\subset B_1$, a constant $\delta>0$ and a sequence $(u_n)_{n\in\mathbb{N}}$ satisfying
\begin{enumerate}
\item $u_n\in S(\lambda,\Lambda)\cap\mathcal{C}^{2,\alpha}(B_1\setminus \Omega)$, for every $n\in\mathbb{N}$;
\item $u_n\longrightarrow u$ in the $\mathcal{C}^{1,\llip}$-topology, and
\item $\dim_\mathcal{H}\Omega\,<\,d\,-\,\delta$.
\end{enumerate}
\end{teo}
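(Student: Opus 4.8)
\medskip
\noindent\textbf{Proof strategy for Theorem~\ref{teo_main2}.}\
The plan is to mollify the operator. Let $(\phi_n)_{n\in\mathbb{N}}$ be a standard mollifier on the space of symmetric matrices $\mathcal{S}(d)$, with $\operatorname{supp}\phi_n\subset B_{1/n}$ and $\int\phi_n=1$, and put
\[
	F_n(M)\;:=\;\int_{\mathcal{S}(d)}F(M-K)\,\phi_n(K)\,\dx K\;-\;\int_{\mathcal{S}(d)}F(-K)\,\phi_n(K)\,\dx K\;+\;F(0).
\]
Then $F_n\in\mathcal{C}^\infty(\mathcal{S}(d))$, $F_n(0)=F(0)$, and---since $\mathcal{P}^-_{\lambda,\Lambda}(M-N)\le F(M-K)-F(N-K)\le\mathcal{P}^+_{\lambda,\Lambda}(M-N)$ for \emph{every} shift $K$---averaging in $K$ shows $F_n$ is again $(\lambda,\Lambda)$-elliptic; thus A\ref{assump_Felliptic} is inherited by $F_n$. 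Let $u_n\in\mathcal{C}(B_1)$ solve $F_n(D^2u_n)=0$ in $B_1$ with boundary datum $u$, obtained by Perron's method (if $u$ is not continuous up to $\partial B_1$, first solve in $B_r$, let $r\uparrow1$, and extract a diagonal subsequence). Since $F_n$ is $(\lambda,\Lambda)$-elliptic with $F_n(0)=F(0)$, we have $u_n\in S(\lambda,\Lambda)$. As $F_n$ is smooth, $DF_n$ has a modulus of continuity and the partial regularity theorem of \cite{Armstrong-Silvestre-Smart_2012} produces a relatively closed set $\Omega=\Omega_n\subset B_1$ with $u_n\in\mathcal{C}^{2,\alpha}(B_1\setminus\Omega)$ for the prescribed $\alpha\in(0,1)$ (the exponent being raised to the prescribed one by the interior Schauder bootstrap on the open set $B_1\setminus\Omega$, since $F_n\in\mathcal{C}^\infty$) and $\dim_\mathcal{H}\Omega\le d-\delta$. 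Crucially $\delta=\delta(d,\lambda,\Lambda)>0$ is \emph{universal}: in \cite{Armstrong-Silvestre-Smart_2012} the size of the singular set is controlled by Lin's $W^{2,\varepsilon}$-estimate \cite{Lin_1986}, whose exponent depends only on $d,\lambda,\Lambda$, not on the (non-uniform in $n$) modulus of $DF_n$. This settles items (1) and (3).

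For the convergence we argue first as for Theorem~\ref{teo_main1}. Uniform ellipticity makes $F$ globally Lipschitz, with constant $L=L(d,\Lambda)$, so $F_n\to F$ uniformly on $\mathcal{S}(d)$ with $\|F_n-F\|_{\infty}\le CL/n$. By the Krylov--Safonov theory \cite{Krylov-Safonov_1980,CafCab95}, $(u_n)$ is bounded in $\mathcal{C}^{1,\beta_0}_{loc}(B_1)$, and by the maximum principle bounded in $L^\infty(B_1)$; hence it is precompact in $\mathcal{C}^{1,\beta}_{loc}(B_1)$ for every $\beta<\beta_0$. Any subsequential limit, by stability of viscosity solutions under uniform convergence of the operators, solves $F(D^2\cdot)=0$ in $B_1$ with datum $u$, so equals $u$ by the comparison principle; hence the whole sequence converges, $u_n\to u$ in $\mathcal{C}^{1,\beta}_{loc}(B_1)$ for all $\beta<\beta_0$.

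The upgrade to the $\mathcal{C}^{1,\llip}$-topology rests on the fact that mollification preserves the recession profile. Writing $F^*(M)=\lim_{\mu\to0^+}\mu F(\mu^{-1}M)$ (additive constants being immaterial for this limit), one has
\[
	\mu F_n(\mu^{-1}M)\;=\;\int_{\mathcal{S}(d)}\mu\,F\!\bigl(\mu^{-1}(M-\mu K)\bigr)\,\phi_n(K)\,\dx K\;+\;o(1)\qquad(\mu\to0^+),
\]
and, by the Lipschitz bound, $\bigl|\mu F(\mu^{-1}(M-\mu K))-\mu F(\mu^{-1}M)\bigr|\le\mu L|K|\le\mu L/n$ on $\operatorname{supp}\phi_n$. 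Hence $F_n^*=F^*$ and---what matters for uniform estimates---the convergence $\mu F_n(\mu^{-1}\cdot)\to F^*$ holds with a modulus $\bar\omega(\mu)\le\omega_F(\mu)+L\mu$ independent of $n$. Thus every $F_n$ is a $(\lambda,\Lambda)$-elliptic operator with the \emph{same} recession profile $F^*$, which by A\ref{assump_recconvex} is convex/concave with $\mathcal{C}^{1,1}$-estimates; so A\ref{assump_recconvex} holds for each $F_n$ with constants uniform in $n$. Invoking the recession-based regularity of \cite{PimTei16,PimSan18,SilTei15}, $u_n\in\mathcal{C}^{1,\llip}_{loc}(B_1)$ with an estimate depending only on $d,\lambda,\Lambda$, the $\mathcal{C}^{1,1}$-bound for $F^*$, $\bar\omega$ and $\|u_n\|_{L^\infty(B_1)}$---in particular uniform in $n$; the same holds for $u$. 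Therefore $(u_n)$ is bounded in $\mathcal{C}^{1,\llip}_{loc}(B_1)$ and, by the previous step, converges to $u$ in $\mathcal{C}^1_{loc}(B_1)$; interpolating these two facts yields convergence in the $\mathcal{C}^{1,\llip}$-topology. Together with items (1) and (3), this proves the theorem.

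The step I expect to be the main obstacle is precisely this uniformity: that the mollified operators $F_n$ stay within the hypotheses of the recession regularity \emph{with constants independent of $n$}---concretely, that $F_n^*=F^*$ and that the modulus governing $\mu F_n(\mu^{-1}\cdot)\to F^*$ does not degenerate as $n\to\infty$. Without it the $\mathcal{C}^{1,\llip}$-bounds for the $u_n$ would not be equibounded and only the weaker $\mathcal{C}^{1,\beta}$-convergence of Theorem~\ref{teo_main1} would survive. The remaining points---the $n$-independence of the dimension defect $\delta$, the Schauder bootstrap off $\Omega$, and the reduction of the Dirichlet problem with datum $u\in\mathcal{C}(B_1)$ to boundary-continuous data---are routine and handled as indicated.
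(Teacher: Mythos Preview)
Your approach is essentially the same as the paper's: mollify $F$, note that ellipticity, the recession profile, and the rate of convergence $\mu F_n(\mu^{-1}\cdot)\to F^*$ are all preserved uniformly in $n$ (the paper records these as Propositions~\ref{prop_elliptic}, \ref{prop_recconvex} and Remark~\ref{rem_rate}), apply the partial regularity result to each $u_n$, and use the uniform $\mathcal{C}^{1,\llip}$-bounds from Proposition~\ref{prop_c1loglip} to upgrade the convergence. You are in fact more careful than the paper on two points: you specify the Dirichlet datum $u$ so that the limit is identified by comparison (the paper merely writes ``consider the sequence of viscosity solutions'' and passes to a subsequence), and you isolate explicitly that the dimension defect $\delta$ is universal.

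One presentational gap: you write ``produces a relatively closed set $\Omega=\Omega_n$'' and then refer to a single $\Omega$ in items (1) and (3), but the partial regularity theorem gives a different singular set $\Omega_n$ for each $n$. The paper closes this by setting $\Omega:=\bigcup_{n\in\mathbb{N}}\Omega_n$ and using that the Hausdorff dimension of a countable union is the supremum of the individual dimensions, so $\dim_{\mathcal{H}}\Omega<d-\delta$ since $\delta$ is universal. You should make this step explicit.
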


This result refines Theorem \ref{teo_main1} as it ensures the convergence of an approximating sequence in the $\mathcal{C}^{1,\llip}$-topology. In particular, it includes the case of the $\mathcal{C}^{1,\beta}$-topology for every $\beta\in(0,1)$. 

Our third result covers the case of inhomogeneous equations, governed by operators with variable coefficients. In \cite{dosTei16} the authors examine conditions on the structure of \eqref{eq_main2} under which flatness implies improved regularity of the solutions. Their findings are sharp, in the sense that Lipschitz continuity of the data with respect to $x$ leads to $\mathcal{C}^{2,\alpha}$-regularity. In contrast, mere continuity implies estimates in $\mathcal{C}^{1,\llip}$-spaces. Consequential on the findings in that paper is a partial regularity result for the solutions to \eqref{eq_main2} provided $F$ is well-prepared; see Proposition \ref{prop_prrvc}. We refer the reader to \cite[Section 5]{dosTei16}.

Our techniques build upon \cite[Section 5]{dosTei16} and produce an approximation result for the solutions to \eqref{eq_main2}. This is reported in our third main result. 

\begin{teo}[Inhomogeneous, variable-coefficients equations]\label{teo_main3}
Let $u\in\mathcal{C}(B_1)$ be a viscosity solution to \eqref{eq_main2}. Suppose A\ref{assump_Felliptic}, A\ref{assump_Fosc} and A\ref{assump_source}, to be detailed further, are in force. For every $\alpha\in(0,1)$, there exists $\Omega\subset B_1$, a constant $\delta>0$ and a sequence $(u_n)_{n\in\mathbb{N}}$ satisfying
\begin{enumerate}
\item $u_n\in S(\lambda,\Lambda)\cap\mathcal{C}^{2,\alpha}(B_1\setminus \Omega)$, for every $n\in\mathbb{N}$;
\item $u_n\longrightarrow u$ in the $\mathcal{C}^{1,\beta}$-topology, for every $\beta\in(0,\beta_{KS})$, where $\beta_{KS}$ is defined as before, and
\item $\dim_\mathcal{H}\Omega\,<\,d\,-\,\delta$.
\end{enumerate}
\end{teo}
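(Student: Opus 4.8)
The plan is to mollify the operator $F$ (and, if needed, the source $f$) in the Hessian variable, solve the corresponding Dirichlet problems, apply the partial regularity result for variable-coefficient equations (Proposition \ref{prop_prrvc}) to each member of the resulting sequence, and pass to the limit by combining the Krylov--Safonov estimates with the stability of viscosity solutions. Concretely, I would fix a standard mollifier $(\rho_n)_{n\in\mathbb{N}}$ on $\mathcal{S}(d)\simeq\mathbb{R}^{d(d+1)/2}$ and set
$$
	F_n(M,x)\,:=\,\int_{\mathcal{S}(d)}\big(F(M-N,x)-F(-N,x)\big)\rho_n(N)\,dN,
$$
normalizing so that $F_n(0,x)\equiv 0$ and absorbing the correction term $\int F(-N,x)\rho_n(N)\,dN$ into a modified source, and similarly replace $f$ by a mollification $f_n:=f\ast\eta_n$ in $x$. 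The first task is to check that the pair $(F_n,f_n)$ still falls under A\ref{assump_Felliptic}, A\ref{assump_Fosc} and A\ref{assump_source}: uniform $(\lambda,\Lambda)$-ellipticity is preserved because $F_n$ is an average of translates of $F$; the oscillation function controlling the $x$-dependence of $F_n$ is dominated, up to a harmless constant, by that of $F$, so A\ref{assump_Fosc} persists (possibly after a preliminary rescaling to restore any required smallness); and $f_n\to f$ in the topology dictated by A\ref{assump_source}. The decisive gain is that $F_n\in\mathcal{C}^\infty(\mathcal{S}(d))$, hence in particular $F_n\in\mathcal{C}^1$ with a modulus of continuity for $D_MF_n$ --- exactly the regularity needed to run the partial regularity machinery.

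Next I would solve, via Perron's method together with the comparison principle, the Dirichlet problem $F_n(D^2u_n,x)=f_n$ in $B_1$ with $u_n=u$ on $\partial B_1$, obtaining $u_n\in\mathcal{C}(\overline{B_1})\cap S(\lambda,\Lambda)$, the ellipticity constants being those of $F$. Proposition \ref{prop_prrvc} then applies to $(F_n,f_n)$ and yields $u_n\in\mathcal{C}^{2,\alpha}(B_1\setminus\Omega_n)$ with $\dim_{\mathcal{H}}\Omega_n\leq d-\varepsilon_0$ for a universal $\varepsilon_0=\varepsilon_0(d,\lambda,\Lambda)>0$; since $F_n$ and $f_n$ are smooth, a standard fully nonlinear bootstrap (differentiate the equation and apply Schauder estimates to the resulting linear equation with Hölder coefficients) upgrades this to $u_n\in\mathcal{C}^\infty(B_1\setminus\Omega_n)$, so item (1) holds for every $\alpha\in(0,1)$. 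Setting $\Omega:=\bigcup_{n}\Omega_n$ and $\delta:=\varepsilon_0/2$, one has $\dim_{\mathcal{H}}\Omega=\sup_n\dim_{\mathcal{H}}\Omega_n\leq d-\varepsilon_0<d-\delta$, so items (1) and (3) hold simultaneously with a single exceptional set.

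For the convergence, comparison with barriers --- the boundary datum being the fixed continuous function $u$ and $\partial B_1$ being smooth --- gives $\|u_n\|_{L^\infty(B_1)}\leq C$ and equicontinuity of $(u_n)_n$ up to $\partial B_1$, both uniform in $n$; the Krylov--Safonov/Caffarelli interior estimate then yields $\|u_n\|_{\mathcal{C}^{1,\beta_{KS}}(B_r)}\leq C(r)$ for every $r<1$. By Arzelà--Ascoli, along a subsequence $u_n\to u_\infty$ in $\mathcal{C}^{1,\beta}_{loc}(B_1)$ for every $\beta<\beta_{KS}$ and in $\mathcal{C}(\overline{B_1})$. Since $F_n\to F$ locally uniformly on $\mathcal{S}(d)\times B_1$ (because $(\lambda,\Lambda)$-elliptic operators are Lipschitz in $M$ and A\ref{assump_Fosc} encodes continuity in $x$) and $f_n\to f$ in the sense of A\ref{assump_source}, the stability of $L^p$-viscosity solutions shows $u_\infty$ solves $F(D^2u_\infty,x)=f$ in $B_1$ with $u_\infty=u$ on $\partial B_1$; by the comparison principle for \eqref{eq_main2}, $u_\infty\equiv u$. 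As the limit is independent of the subsequence, the full sequence converges, which is item (2).

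The step I expect to be the main obstacle is the verification that mollification in the Hessian variable preserves \emph{well-preparedness} in the precise sense demanded by Proposition \ref{prop_prrvc} --- that is, that A\ref{assump_Fosc}, together with whatever smallness on the oscillation function is built into that proposition, survives the convolution (a rescaling argument should close the gap) --- combined with the bookkeeping needed to transfer the boundary condition to the limit and to invoke comparison for the inhomogeneous variable-coefficient equation. By contrast, the construction of the approximate solutions, the interior regularity estimates, and the stability argument are, by now, routine.
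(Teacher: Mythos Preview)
Your argument has a genuine gap at the point where you invoke Proposition~\ref{prop_prrvc}. That proposition requires not merely the oscillation control A\ref{assump_Fosc}, but that $F$ and $f$ be \emph{Lipschitz continuous in $x$}. You mollify $F$ only in the Hessian variable, so your $F_n(\,\cdot\,,x)$ is smooth in $M$ but inherits from $F$ nothing more than the $L^d$-oscillation bound in $x$; this is strictly weaker than Lipschitz continuity, and Proposition~\ref{prop_prrvc} does not apply. The paper closes this gap by a \emph{double} convolution,
\[
	F_{\varepsilon,\tau}(M,x)\,:=\,\int_{B_1}\int_{\mathbb{R}^{d^*}}F(M-Q,x-y)\,\eta_\varepsilon(Q)\,\eta_\tau(y)\,{\bf d}Q\,{\bf d}y,
\]
which renders the approximate operator smooth in \emph{both} entries, hence in particular Lipschitz in $x$, and likewise replaces $f$ by $f_m=f\ast\eta_{1/m}$. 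The check you flagged as the main obstacle --- that the oscillation condition A\ref{assump_Fosc} survives mollification --- is handled in the paper as Proposition~\ref{prop_oscreg}, but that alone would not suffice: you also need the spatial mollification to recover the Lipschitz hypothesis of Proposition~\ref{prop_prrvc}.

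Apart from this, your treatment of the convergence step is actually more careful than the paper's. You fix the boundary datum $u_n|_{\partial B_1}=u$, build barriers, and use the comparison principle to identify the limit uniquely as $u$; the paper simply asserts subsequential convergence to a solution without pinning down the boundary condition or justifying why the limit is the given $u$. Your route here is cleaner, and once you add the mollification in $x$, the rest of your outline goes through essentially as written.
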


We argue through a regularization of the operator $F$, by means of a molification strategy. That is, for $\varepsilon>0$ we introduce the auxiliary operator
\[
	F_\varepsilon(M)\,:=\,\left(F\,\ast\,\eta_\varepsilon\right)(M),
\]
where $\eta_\varepsilon:\mathbb{R}^\frac{d(d+1)}{2}\to\mathbb{R}$ is a symmetric molifying kernel. It is evident that $F_\varepsilon$ is continuously differentiable; therefore, solutions to
\[
	F_\varepsilon(D^2u_\varepsilon)\,=\,0\;\;\;\;\;\mbox{in}\;\;\;\;\;B_1
\]
are under the scope of the partial regularity result. In addition, $F_\varepsilon$ converges locally uniformly to the original operator $F$, since the latter is $(\lambda,\Lambda)$-elliptic. 

The technical difficulties are in verifying that pivotal properties of the original operator are preserved under convolution. Those include ellipticity, oscillation control and regularity profiles associated with the recession function.

Once those difficulties are tackled, the family $(u_\varepsilon)_{\varepsilon>0}$ is entitled to regularity results which ensure convergence in the desired topologies.

The remainder of this paper is organized as follows: Section \ref{subsec_ma} details the main assumptions under which we work. In Section \ref{subsec_aux} we gather a few preliminary results, whereas Section \ref{sec_potc} presents properties of regularized operators. Section \ref{sec_proof12} reports the proofs of Theorem \ref{teo_main1} and Theorem \ref{teo_main2}. The proof of Theorem \ref{teo_main3} is the subject of Section \ref{sec_generalizations}.

\section{Main assumptions and preliminary material}\label{sec_mapm}

In the sequel, we detail our main assumptions. Moreover, we gather preliminary results to which we resort throughout the paper.

\subsection{Main assumptions}\label{subsec_ma}

We start by detailing the hypotheses used in this article. With $B_1$ we denote the open unit ball in the Euclidean space $\mathbb{R}^d$. Also, the space of $d\times d$ symmetric matrices is denoted by $\mathcal{S}(d).$ Our first assumption concerns the uniform ellipticity of $F$.

\begin{Assumption}[Uniform ellipticity]\label{assump_Felliptic}
We suppose $F:\mathcal{S}(d)\to\mathbb{R}$ is $(\lambda,\Lambda)$-elliptic. That is, for $0<\lambda\leq\Lambda$, we have
\[
	\lambda\left\|N\right\|\,\leq\,F(M\,+\,N)\,-\,F(M)\,\leq\,\Lambda\left\|N\right\|,
\]
for every $M,\,N\in\mathcal{S}(d)$, with $N\geq 0$. In addition, we suppose $F(0)=0$.
\end{Assumption}

It is a known fact that  A\ref{assump_Felliptic} implies that $F$ is uniformly Lipschitz, with constant $K_F=K_F(\lambda,\Lambda,d)$. In addition, we notice that requiring $F(0)=0$ imposes no further restrictions on the operator, since $G:=F-F(0)$ has the same ellipticity constants as $F$. In the case of variable coefficients $F(M,x)$, A\ref{assump_Felliptic} adjusts as usual.

The next assumption concerns the recession profile associated with the operator $F$. We recall that
\[
 F^*(M)\,:=\,\lim_{\mu\to 0}\mu F(\mu^{-1}M).
\]

\begin{Assumption}[Convexity of the recession profile]\label{assump_recconvex}
We suppose the recession profile $F^*$ to be convex.
\end{Assumption}

The recession profile associated with a given operator is an important tool in the study of regularity issues. For its main properties, and usual hypotheses placed on this object, we refer the reader to \cite{PimSan18,SilTei15}.

Our arguments rely on the convolution of mappings acting on symmetric matrices. For ease of presentation we set
\[
	d^*\,:=\,\frac{d(d\,+\,1)}{2}.
\]
Notice that the algebraic dimension on $\mathcal{S}(d)$ is precisely $d^*$.

In Section \ref{sec_generalizations} we address fully nonlinear operators $F=F(M,x)$, with explicit dependence on the space variable $x\in B_1$, governing inhomogeneous equations. The next assumptions set the conditions under which we operate in this setting.


\begin{Assumption}[Oscillation control]\label{assump_Fosc}
We suppose the oscillation measure
\[
	\beta(x,x_0)\,:=\,\sup_{M\in\mathcal{S}(d)}\,\frac{\left|F(M,x)\,-\,F(M,x_0)\right|}{\left\|M\right\|\,+\,1}
\]
satisfies
\[
	\int_{B_r(x_0)}\beta(x,x_0)^d{\bf d}x\,\leq\,\beta_0^dr^d,
\]
for some $\beta_0>0$.
\end{Assumption}

Introduced in \cite{Caf89}, the former assumption is a cornerstone of the theory for variable-coefficients operators. See \cite{CafCab95} for variants of A\ref{assump_Fosc} appearing in distinct regularity regimes; see also \cite[Remark 8.2]{CafCab95} for the connection of $\beta(\cdot,x_0)$ with the H\"older-regularity of $F$ with respect to $x$. We conclude with an assumption on the source term $f$.

\begin{Assumption}[Inhomogeneous setting]\label{assump_source}We suppose $f\in L^\infty(B_1)$ to be a continuous function.
\end{Assumption}

The former assumption is instrumental in framing our analysis in the context of continuous viscosity solutions. In the next section we collect a few preliminary results.

\subsection{A few preliminary results}\label{subsec_aux}

In what follows we collect former results and developments used in the paper. We begin with the partial regularity result.

\begin{Proposition}[Partial regularity result]\label{prop_prr}
Let $u\in\mathcal{C}(B_1)$ be a viscosity solution to \eqref{eq_main}. Suppose A\ref{assump_Felliptic} is in force. Suppose further $F\in\mathcal{C}^1(\mathcal{S}(d))$, with a uniform modulus of continuity. For every $\alpha\in(0,1)$ there exists $\Omega\subset B_1$ and a universal constant $\delta>0$ such that $u\in\mathcal{C}^{2,\alpha}(B_1\setminus\Omega)$ and $\dim_\mathcal{H}\Omega<d-\delta$.
\end{Proposition}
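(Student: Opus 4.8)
Proposition \ref{prop_prr} is a restatement of the partial regularity theorem of Armstrong, Silvestre and Smart \cite{Armstrong-Silvestre-Smart_2012}, so the plan is to recall the architecture of that argument rather than to reprove it \emph{ex nihilo}. It rests on three ingredients: the small-perturbation (``flatness implies $\mathcal{C}^{2,\alpha}$'') theory of Savin \cite{Savin_2007}, the $W^{2,\varepsilon}$ a priori estimate of F.-H. Lin \cite{Lin_1986} --- see also \cite[Proposition 7.4]{CafCab95} --- and a covering argument from geometric measure theory. Fix $\alpha\in(0,1)$ throughout.

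First I would set up a quantitative measure of flatness: for $x_0\in B_1$ and $0<r<1-|x_0|$, put
\[
	\Theta(x_0,r)\,:=\,\inf_{P}\,\frac{1}{r^2}\left\|u-P\right\|_{L^\infty(B_r(x_0))},
\]
the infimum running over quadratic polynomials $P$ with $F(D^2P)=0$, and let $P_{x_0,r}$ be one nearly realizing it. The decisive step is an \emph{improvement-of-flatness} lemma: there exist universal constants $\varepsilon_0>0$ and $\rho\in(0,1/2)$ such that
\[
	\Theta(x_0,r)\,\leq\,\varepsilon_0\qquad\Longrightarrow\qquad\Theta(x_0,\rho r)\,\leq\,\rho^\alpha\,\Theta(x_0,r),
\]
together with the control $|D^2P_{x_0,\rho r}-D^2P_{x_0,r}|\leq C\,\Theta(x_0,r)$ on the near-optimal polynomials at consecutive scales. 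This is exactly where A\ref{assump_Felliptic} and the hypothesis $F\in\mathcal{C}^1(\mathcal{S}(d))$ with a uniform modulus of continuity are used: normalizing $u$ about $P_{x_0,r}$ at scale $r$ produces a function solving a uniformly elliptic equation whose operator converges, as $\Theta(x_0,r)\to 0$, to the constant-coefficient \emph{linear} operator $N\mapsto DF(D^2P_{x_0,r}):N$; the uniform $\mathcal{C}^1$ modulus of $F$ renders this convergence quantitative, and the interior Schauder estimate for the limiting linear problem yields the decay via a compactness-and-contradiction argument. Iterating the lemma, I would conclude that $\Theta(x_0,r_0)\leq\varepsilon_0$ for some admissible $r_0$ forces $u\in\mathcal{C}^{2,\alpha}$ in a neighborhood of $x_0$, with estimates --- so the set of such ``good'' points is open.

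Next I would introduce the singular set
\[
	\Omega\,:=\,\left\{x_0\in B_1\;:\;\Theta(x_0,r)\,>\,\varepsilon_0\ \text{ for every }\ 0<r<1-|x_0|\right\},
\]
which is relatively closed, and by the previous step $u\in\mathcal{C}^{2,\alpha}(B_1\setminus\Omega)$. To bound its size I would invoke \cite[Proposition 7.4]{CafCab95}: there is a universal exponent $\varepsilon=\varepsilon(\lambda,\Lambda,d)>0$ such that the maximal quadratic-oscillation functional of $u$ lies in weak-$L^\varepsilon$ on compact subsets of $B_1$, with the corresponding bound --- this is the $W^{2,\varepsilon}$ estimate, and it already gives $|\Omega|=0$ since at a.e.\ point $u$ admits a second-order Taylor expansion, forcing $\Theta(x_0,r)\to0$. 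The quantitative dimension bound then comes from feeding the scale-invariant nature of the failure of flatness on $\Omega$ into a covering argument against the $W^{2,\varepsilon}$-mass of $D^2u$: iterating the improvement-of-flatness dichotomy through the dyadic scales $2^{-k}$, a generation-$k$ cube can meet $\Omega$ only by consuming a definite portion of that mass, which limits the number of generation-$k$ cubes meeting $\Omega$ to $C\,2^{k(d-\varepsilon)}$. Hence $\mathcal{H}^{d-\varepsilon}(\Omega)<\infty$, so $\dim_\mathcal{H}\Omega\leq d-\varepsilon$, and taking any universal $\delta\in(0,\varepsilon)$ gives $\dim_\mathcal{H}\Omega<d-\delta$.

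The hard part is the improvement-of-flatness lemma: it relies on Savin's perturbation machinery --- above all on quantifying, through the uniform $\mathcal{C}^1$ modulus of $F$, the defect between $F$ and its linearization at $D^2P_{x_0,r}$ --- and on Schauder theory for the resulting constant-coefficient equation. The passage from the $W^{2,\varepsilon}$ bound to the Hausdorff dimension estimate, while demanding careful covering bookkeeping, is comparatively standard once the non-flatness/oscillation dichotomy is in place. A complete proof is given in \cite{Armstrong-Silvestre-Smart_2012}.
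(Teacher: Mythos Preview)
Your proposal is correct and aligned with the paper's own treatment: the paper does not give an independent proof of Proposition \ref{prop_prr} but simply refers the reader to \cite{Armstrong-Silvestre-Smart_2012}, whose architecture you have accurately summarized. Your sketch of the improvement-of-flatness/$W^{2,\varepsilon}$/covering strategy is exactly the content of that reference, so there is nothing to add.
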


For the proof of Proposition \ref{prop_prr} we refer the reader to \cite{Armstrong-Silvestre-Smart_2012}. This result states that solutions to nonlinear equations driven by regular operators fail $\mathcal{C}^{2,\alpha}$-regularity only within a set of controlled Hausdorff dimension. We also resort to a generalization of Proposition \ref{prop_prr} to the context of inhomogeneous, variable-coefficients equations; see \cite[Corollary 5.2]{dosTei16}.

\begin{Proposition}\label{prop_prrvc}
Let $u\in\mathcal{C}(B_1)$ be a viscosity solution to \eqref{eq_main2}. Suppose A\ref{assump_Felliptic} is in force. For every $x\in B_1$, suppose $F(\,\cdot\,,x)\in\mathcal{C}^1(\mathcal{S}(d))$, with a modulus of continuity not depending on $x$. In addition, assume $F$ and the source term $f$ are Lipschitz continuous with respect to $x$. Then, for every $\alpha\in(0,1)$ there exists $\Omega\subset B_1$ and a universal constant $\delta>0$ such that $u\in\mathcal{C}^{2,\alpha}(B_1\setminus\Omega)$ and $\dim_\mathcal{H}\Omega<d-\delta$.
\end{Proposition}

For the proof of the former proposition we refer the reader to \cite{dosTei16}. In fact, in that paper the authors establish a variant of this result, in the context of inhomogeneous fixed-coefficients equations. However, the Lipschitz-continuity of $F$ with respect to $x$ unlocks the argument leading to Proposition \ref{prop_prrvc}.

Our analysis also depends on former regularity results. We are interested in improved regularity in borderline-H\"older spaces.



\begin{Proposition}[Regularity in $\mathcal{C}^{1,\llip}$-spaces]\label{prop_c1loglip}
Let $u\in\mathcal{C}(B_1)$ be a viscosity solution to
\[
	F(D^2u)\,=\,f\;\;\;\;\;\,\mbox{in}\;\;\;\;\;B_1,
\]
where $f\in L^\infty(B_1)$. Suppose A\ref{assump_Felliptic} and A\ref{assump_recconvex} are in force. Then $u\in \mathcal{C}^{1,\llip}_{loc}(B_1)$ and there exists $C>0$, depending solely on the dimension $d$ and the ellipticity constants $\lambda$ and $\Lambda$, such that 
\[
	\sup_{x\in B_r(x_0)}\left|u(x)\,-\,u(x_0)\,-\,Du(x_0)\cdot(x\,-\,x_0)\right|\,\leq\,Cr^2\ln r^{-1},
\]
for every $B_r(x_0)\Subset B_1$.
\end{Proposition}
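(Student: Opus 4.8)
The plan is to obtain the $\mathcal{C}^{1,\llip}$-estimate through a geometric iteration at a point $x_0$, comparing $u$ at each dyadic scale with the solution of a homogeneous equation driven by the recession operator $F^*$, which by A\ref{assump_recconvex} is convex and hence enjoys Evans--Krylov $\mathcal{C}^{2,\alpha}$ (in particular $\mathcal{C}^{1,1}$) estimates. First I would reduce to the case $x_0=0$ and, by the usual normalization, assume $\|u\|_{L^\infty(B_1)}\le 1$ and that $\|f\|_{L^\infty(B_1)}$ is as small as we wish: this is legitimate because multiplying $u$ by a small constant $\mu$ and rescaling turns $F(D^2u)=f$ into $F_\mu(D^2(\mu u))=\mu f$ with $F_\mu(M):=\mu F(\mu^{-1}M)$, and $F_\mu$ has the same ellipticity constants as $F$ and converges to $F^*$ as $\mu\to0$. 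The key is that the smallness of the source and the closeness of $F_\mu$ to $F^*$ can be made to cooperate with a compactness/approximation lemma.

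\smallskip

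Second, I would establish the approximation step: there is $0<\rho<1$, depending only on $d,\lambda,\Lambda$, such that if $\|u\|_{L^\infty(B_1)}\le1$ and both $\|f\|_{L^\infty(B_1)}$ and a suitable measure of the deviation of $F$ from its recession profile are small enough, then there is a function $h$ solving $F^*(D^2h)=0$ in $B_{1/2}$ with $\|u-h\|_{L^\infty(B_{1/2})}\le\rho^2$. This follows from the standard Caffarelli-type compactness argument: if it failed, one would extract sequences $u_k$, $f_k\to0$, $F_k\to F^*$ locally uniformly, with $u_k$ staying uniformly away from every $F^*$-harmonic function; but the $\mathcal{C}^{1,\alpha_0}$ Krylov--Safonov estimates give local uniform (indeed $\mathcal{C}^{1,\alpha_0}$) compactness of $(u_k)$, and by stability of viscosity solutions the limit solves $F^*(D^2u_\infty)=0$, a contradiction. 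Since $F^*$ is convex, Evans--Krylov yields $h\in\mathcal{C}^{2,\alpha}(B_{1/4})$ with a universal bound, so in particular $\|h-h(0)-Dh(0)\cdot x\|_{L^\infty(B_\rho)}\le C_{EK}\rho^2$ for every small $\rho$.

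\smallskip

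Third, I would run the iteration. Choosing $\rho$ so that $C_{EK}\rho^{2}\le\tfrac12\rho^{2}$ (i.e. simply $\rho$ small, absorbing the Evans--Krylov constant), set $u_0:=u$ and inductively define $u_{k+1}(x):=\rho^{-2}\big(u_k(\rho x)-a_k-b_k\cdot\rho x\big)$ where $a_k,b_k$ are the zeroth- and first-order coefficients of the affine approximation of the corresponding $F^*$-harmonic comparison function at scale $\rho^k$. Each $u_{k+1}$ solves an equation of the same type, with ellipticity constants unchanged, with source term $f_k$ of norm $\rho^{2k}\|f\|$ (hence still small, in fact decaying), and one must check that the rescaled operators $F_k(M)=\rho^{2k}F_{k-1}(\rho^{-2k}\,\cdot\,)$ stay within the smallness regime of the approximation lemma --- this is where one uses that rescaling drives $F$ toward $F^*$, so the deviation only improves along the iteration. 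The outcome is $\|u_k\|_{L^\infty(B_1)}\le1$ for all $k$, and the affine functions $\ell_k(x):=a_0+\dots$ (the accumulated expansions) satisfy $\|u-\ell_k\|_{L^\infty(B_{\rho^k})}\le\rho^{2k}$ with $|b_{k+1}-b_k|\le C\rho^{k}$. Summing the geometric-type series, the $b_k$ converge to a vector which must be $Du(0)$, and one gets $\sup_{B_r}|u-u(0)-Du(0)\cdot x|\le C r^2$ at the discrete radii $r=\rho^k$; interpolating between consecutive dyadic scales produces the extra logarithmic loss, giving the claimed $Cr^2\ln r^{-1}$ bound (the $\ln$ appears precisely because the second-order coefficient would accumulate a divergent geometric series if it were kept, so one stops at first order and pays a logarithm).

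\smallskip

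The main obstacle I anticipate is the bookkeeping of the rescaled operators in the iteration: verifying that at every step $F_k$ remains uniformly close to a fixed convex operator with $\mathcal{C}^{1,1}$-estimates, so that the approximation lemma and the Evans--Krylov bound apply with constants independent of $k$. The recession profile is the right object here because $\mu F(\mu^{-1}M)\to F^*(M)$ as $\mu\to0$ is exactly the content of its definition, so each rescaling by $\rho^2$ can only push us further into the favorable regime; making this quantitative --- and uniform over $x_0$ in a compactly contained subball --- is the technical heart of the argument, and it is handled in the references \cite{PimSan18,SilTei15} to which the statement already alludes.
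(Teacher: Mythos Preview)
The paper does not give a proof of this proposition; it simply cites \cite{SilTei15}. Your high-level strategy --- normalize via the scaling $\mu F(\mu^{-1}\,\cdot\,)\to F^*$, prove an approximation lemma by compactness, and iterate at dyadic scales --- is indeed the method of that reference, so at the level of architecture you are on the right track.

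There is, however, a genuine gap in the iteration step as you have written it. You subtract only the \emph{affine} part of the $F^*$-harmonic comparison function and rescale by $\rho^{-2}$. The resulting bound is $\|u_{k+1}\|_{L^\infty(B_1)}\le \rho^{-2}\delta+C_{EK}$, where $C_{EK}$ is the universal $\mathcal C^{1,1}$-constant for $h$. Your condition ``$C_{EK}\rho^2\le\tfrac12\rho^2$'' is equivalent to $C_{EK}\le\tfrac12$, which is a constraint on a fixed universal constant, not on $\rho$; there is no reason it holds, and in general the loop does not close with affine subtraction alone. Relatedly, if your scheme did yield $\|u-\ell_k\|_{L^\infty(B_{\rho^k})}\le C\rho^{2k}$ with $|b_{k+1}-b_k|\le C\rho^k$, the standard summation argument would give a clean $Cr^2$ bound --- i.e.\ $\mathcal C^{1,1}$-regularity --- which is known to be false under A\ref{assump_Felliptic}--A\ref{assump_recconvex}. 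Interpolation between dyadic radii does not introduce a logarithm.

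The correct bookkeeping (as in \cite{SilTei15}) subtracts the full \emph{quadratic} Taylor polynomial $P_k$ of $h_k$. Then $\|u_k-h_k\|\le\delta$ together with the Evans--Krylov estimate $\|h_k-P_k\|_{L^\infty(B_\rho)}\le C_{EK}\rho^{2+\alpha}$ lets you choose $\rho$ small (so that $C_{EK}\rho^\alpha\le\tfrac12$) and $\delta\le\tfrac12\rho^2$, closing the iteration with $\|u_{k+1}\|\le1$. The price is that the Hessians $D^2P_k$ are merely bounded, $|D^2P_{k+1}-D^2P_k|\le C$, hence $|D^2P_k|\le Ck$. Passing from $P_k$ to the limiting affine function $\ell_\infty$ then costs $Ck\rho^{2k}\sim Cr^2\ln r^{-1}$, which is exactly the Log-Lipschitz estimate. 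So the logarithm comes from the linear growth of the accumulated second-order coefficients, not from stopping at first order.
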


We refer the reader to \cite{SilTei15} for a proof of Proposition \ref{prop_c1loglip}. We notice that under an appropriate oscillation control Proposition \ref{prop_c1loglip} extends to the case of variable coefficients. Also, the requirement $f\in L^\infty(B_1)$ in the latter can be replaced with $f\in \mbox{BMO}(B_1)$. See \cite{SilTei15}; see also \cite{PimSan18}. 

In the sequel we examine the effect of convolutions on the properties of fully nonlinear elliptic operators.

\section{Properties of the convolution}\label{sec_potc}

Throughout this section we consider standard mollifiers $\eta_\varepsilon\in\mathcal{C}^\infty(\mathbb{R}^{d^*})$, for $\varepsilon>0$, and define
\begin{equation}\label{eq_conv}
	F_\varepsilon(M)\,:=\,\left(F\,\star\,\eta_\varepsilon\right)(M),
\end{equation}
for every $M\in\mathcal{S}(d)$. 

Under A\ref{assump_Felliptic}, it is straightforward to notice the convolution in \eqref{eq_conv} is well defined. Moreover, that $F_\varepsilon\to F$ uniformly on compact subsets of $\mathcal{S}(d)$, as $\varepsilon\to 0$. Finally, $F_\varepsilon\in\mathcal{C}^\infty(\mathcal{S}(d))$. See \cite[Theorems C.19 and C.20]{Leo09}.

We move forward with a proposition.

\begin{Proposition}\label{prop_elliptic}
Suppose A\ref{assump_Felliptic} holds true. Then, the operator $F_\varepsilon$ is $(\lambda,\Lambda)$-elliptic for every $\varepsilon>0$.
\end{Proposition}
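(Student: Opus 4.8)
The plan is to exploit the fact that the convolution in \eqref{eq_conv} is an averaging operation against a nonnegative kernel of unit mass, so that the ellipticity inequalities of A\ref{assump_Felliptic}, being linear in $F$, pass verbatim to $F_\varepsilon$. First I would record that, under the identification $\mathcal{S}(d)\cong\mathbb{R}^{d^*}$, the operator $F_\varepsilon$ is given by
\[
	F_\varepsilon(M)\,=\,\int_{\mathcal{S}(d)}F(M-P)\,\eta_\varepsilon(P)\,{\bf d}P,
\]
and that this integral is finite and smooth in $M$: indeed A\ref{assump_Felliptic} forces the bound $|F(M-P)|\leq K_F\|M-P\|$, which grows at most linearly in $P$, while $\eta_\varepsilon$ is compactly supported, legitimizing both the convergence of the integral and differentiation under the integral sign (this is also the content of the already-cited \cite[Theorems C.19 and C.20]{Leo09}).

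Next I would fix $M,N\in\mathcal{S}(d)$ with $N\geq 0$ and $\varepsilon>0$, and apply the ellipticity of $F$ pointwise in the mollification variable. Since the translation $M\mapsto M-P$ keeps matrices symmetric and leaves the cone condition $N\geq 0$ untouched, for every fixed $P\in\mathcal{S}(d)$ one has
\[
	\lambda\|N\|\,\leq\,F\big((M-P)+N\big)-F(M-P)\,\leq\,\Lambda\|N\|.
\]
Multiplying through by $\eta_\varepsilon(P)\geq 0$, integrating over $P\in\mathcal{S}(d)$, using $\int_{\mathcal{S}(d)}\eta_\varepsilon=1$ on the two outer terms, and recognizing the middle term as $F_\varepsilon(M+N)-F_\varepsilon(M)$ by linearity of the convolution, I obtain
\[
	\lambda\|N\|\,\leq\,F_\varepsilon(M+N)-F_\varepsilon(M)\,\leq\,\Lambda\|N\|,
\]
which is exactly $(\lambda,\Lambda)$-ellipticity of $F_\varepsilon$, for every $\varepsilon>0$.

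I do not anticipate a genuine obstacle: the argument is essentially bookkeeping, and the only points warranting care are confirming that the integration variable ranges over $\mathcal{S}(d)$ (so $M-P$ stays symmetric and the sign condition on $N$ is preserved by the shift) and invoking the linear-growth bound from A\ref{assump_Felliptic} to justify the interchange of integration with the difference $F_\varepsilon(M+N)-F_\varepsilon(M)$. I would also add a short remark that, although $F_\varepsilon(0)=\int_{\mathcal{S}(d)}F(-P)\eta_\varepsilon(P)\,{\bf d}P$ need not vanish, one may replace $F_\varepsilon$ by $F_\varepsilon-F_\varepsilon(0)$ without affecting the ellipticity constants, exactly as observed after A\ref{assump_Felliptic}, so the normalization $F(0)=0$ can be maintained along the approximating family.
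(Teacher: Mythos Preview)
Your proposal is correct and follows essentially the same route as the paper: both arguments write $F_\varepsilon(M+N)-F_\varepsilon(M)$ as the integral of $F(M+N-P)-F(M-P)$ against $\eta_\varepsilon$, apply the pointwise ellipticity bounds inside the integral, and use $\int\eta_\varepsilon=1$. Your version is in fact slightly tidier---you obtain both inequalities in one stroke without introducing absolute values, and your closing remark that $F_\varepsilon(0)$ need not vanish (so one normalizes by subtracting it) is a worthwhile addition that the paper's proof omits.
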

\begin{proof}
We have
\[
	F_\varepsilon(M\,+\,N)\,-\,F_\varepsilon(M)\,=\,\int_{\mathbb{R}^{d^*}}\left[F(M+N-Q)-F(M-Q)\right]\eta_\varepsilon(Q){\bf d}Q.
\]
Hence, 
\begin{equation}\label{eq_ellip1}
	\begin{split}
		\left|F_\varepsilon(M\,+\,N)\,-\,F_\varepsilon(M)\right|\,&\leq\,\int_{\mathbb{R}^{d^*}}\left|F(M+N-Q)-F(M-Q)\right|\eta_\varepsilon(Q){\bf d}Q\\
			&\leq\,\Lambda\left\|N\right\|\int_{\mathbb{R}^{d^*}}\eta_\varepsilon(Q){\bf d}Q\\
			&=\,\Lambda\left\|N\right\|.
	\end{split}
\end{equation}
An analogous computation yields
\begin{equation}\label{eq_ellip2}
	\left|F_\varepsilon(M\,+\,N)\,-\,F_\varepsilon(M)\right|\,\geq \lambda\left\|N\right\|.
\end{equation}
By combining \eqref{eq_ellip1} and \eqref{eq_ellip2} we complete the proof.
\end{proof}

Therefore, the family $(F_\varepsilon)_{\varepsilon>0}$ inherits the uniform ellipticity of the original operator $F$. However, the mollification also preserves asymptotic properties. In fact, if we suppose that $F^*$ is convex, then $(F_\varepsilon)^*$ is also convex, for every $\varepsilon>0$. As a consequence, by requiring $F^*$ to be convex, we ensure that $F_\varepsilon$ has a convex recession profile for every $\varepsilon>0$. 

Before the next proposition we observe that
\begin{equation}\label{op de recessao com Q}
	F^*(M)\,=\,\lim_{\mu\to 0}\mu F(\mu^{-1}M\,-\,Q)\;\;\;\mbox{for all}\;\;\;Q\,\in\,\mathcal{S}(d). 
\end{equation}
Indeed, notice that
\[
	\begin{split}
		\left|\mu F(\mu^{-1}M\,-\,Q)\,-\,\mu F(\mu^{-1}M)\right|\,\leq\,\mu K_{F}\left\|Q\right\|,
	\end{split}
\]
for every $Q\in\mathcal{S}(d)$.

\begin{Proposition}[Convexity of the regularized recession operator]\label{prop_recconvex}
Suppose A\ref{assump_Felliptic} and A\ref{assump_recconvex} are in force. Then, $(F_\varepsilon)^*$ is convex, for every $\varepsilon>0$.

\end{Proposition}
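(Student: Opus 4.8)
The plan is to show directly that $(F_\varepsilon)^*$ coincides with a convolution-like average of $F^*$, which is convex as an average of the convex function $F^*$ composed with translations. The key identity to establish is
\[
	(F_\varepsilon)^*(M)\,=\,\int_{\mathbb{R}^{d^*}}F^*(M)\,\eta_\varepsilon(Q)\,{\bf d}Q\,=\,F^*(M),
\]
so in fact $(F_\varepsilon)^* = F^*$, from which convexity is immediate by A\ref{assump_recconvex}. First I would write, using the definition of the recession profile and \eqref{eq_conv},
\[
	(F_\varepsilon)^*(M)\,=\,\lim_{\mu\to 0}\mu F_\varepsilon(\mu^{-1}M)\,=\,\lim_{\mu\to 0}\int_{\mathbb{R}^{d^*}}\mu F(\mu^{-1}M\,-\,Q)\,\eta_\varepsilon(Q)\,{\bf d}Q.
\]
Then the task reduces to passing the limit inside the integral and recognizing the integrand's limit via \eqref{op de recessao com Q}.

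The second step is to justify the interchange of limit and integral. The bound
\[
	\left|\mu F(\mu^{-1}M\,-\,Q)\right|\,\leq\,\left|\mu F(\mu^{-1}M)\right|\,+\,\mu K_F\left\|Q\right\|\,\leq\,K_F\left\|M\right\|\,+\,\mu K_F\left\|Q\right\|,
\]
valid since $F(0)=0$ and $F$ is $K_F$-Lipschitz, gives, for $\mu\le 1$ and $Q$ in the (compact) support of $\eta_\varepsilon$, a uniform-in-$\mu$ integrable dominating function $K_F\|M\| + K_F\|Q\|$ against the probability density $\eta_\varepsilon$. Hence dominated convergence applies, and by \eqref{op de recessao com Q} the pointwise limit of the integrand is $F^*(M)\eta_\varepsilon(Q)$. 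Since $\int \eta_\varepsilon = 1$, we conclude $(F_\varepsilon)^*(M) = F^*(M)$ for every $M\in\mathcal{S}(d)$.

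Finally, convexity of $(F_\varepsilon)^*$ follows at once from A\ref{assump_recconvex}, which asserts $F^*$ is convex, together with the identity just proved. I expect the main (and only real) obstacle to be the careful verification of the domination needed for the limit-integral interchange — in particular observing that the support of $\eta_\varepsilon$ is compact so that $\|Q\|$ is bounded on it, making the crude bound above suffice; everything else is a direct unwinding of definitions. One could alternatively avoid the limit interchange entirely by noting that $F^*$ is itself $(\lambda,\Lambda)$-elliptic with $F^*(0)=0$ and that $(F_\varepsilon)^* = (F^*)_\varepsilon$ by a scaling change of variables $Q\mapsto \mu^{-1}Q$ inside the convolution before taking $\mu\to 0$; but since here $(F^*)_\varepsilon$ would not obviously equal $F^*$, the cleanest route remains the direct computation showing $(F_\varepsilon)^* = F^*$.
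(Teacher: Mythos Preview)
Your proof is correct and follows essentially the same mechanism as the paper's: both pass the limit $\mu\to 0$ inside the convolution integral and invoke \eqref{op de recessao com Q} to identify the limit. Your version is actually a bit more direct---you make the identity $(F_\varepsilon)^*=F^*$ explicit and deduce convexity at once, whereas the paper verifies convexity at a generic convex combination (implicitly using that same identity twice); you are also more careful in justifying the limit-integral interchange via dominated convergence, a step the paper simply asserts.
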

\begin{proof}
	Fix $0<\rho<1$ and notice that
	$$
(F_\varepsilon)^*(\rho M+(1-\rho)N)= \lim_{\mu\to 0}\mu \int_{\mathbb{R}^{d^*}} F(\mu^{-1}(\rho M+(1-\rho)N)-Q)\eta_\varepsilon(Q){\bf d}Q.
	$$
By \eqref{op de recessao com Q}, we obtain
$$
 (F_\varepsilon)^*(\rho M+(1-\rho)N)=\int_{\mathbb{R}^{d^*}} F^*((\rho M+(1-\rho)N))\eta_\varepsilon(Q){\bf d}Q.
$$
As A\ref{assump_recconvex} holds, we have
\[
	\begin{split}
		\int_{\mathbb{R}^{d^*}} F^*((\rho M+(1-\rho)N))\eta_\varepsilon(Q)dQ&\leq \rho\int_{\mathbb{R}^{d^*}} F^*( M)\eta_\varepsilon(Q){\bf d}Q\\
			&\quad+(1-\rho)\int_{\mathbb{R}^{d^*}} F^*(N)\eta_\varepsilon(Q){\bf d}Q.
	\end{split}
\]
Once again, by using \eqref{op de recessao com Q}, we have
$$
	\int_{\mathbb{R}^{d^*}} F^*( M)\eta_\varepsilon(Q){\bf d}Q=\lim_{\mu\to 0}\mu\int_{\mathbb{R}^{d^*}} F(\mu^{-1} M-Q)\eta_\varepsilon(Q){\bf d}Q=(F_\varepsilon)^*(M)
$$
and
$$
\int_{\mathbb{R}^{d^*}} F^*(N)\eta_\varepsilon(Q){\bf d}Q=\lim_{\mu\to 0}\mu\int_{\mathbb{R}^{d^*}} F((\mu^{-1}N-Q)\eta_\varepsilon(Q){\bf d}Q=(F_\varepsilon)^*(N).
$$
Hence,
$$
 (F_\varepsilon)^*(\rho M+(1-\rho)N)\leq \rho (F_\varepsilon)^*(M)+ (1-\rho)(F_\varepsilon)^*(N),
$$
which ends the proof.
\end{proof}

\begin{Remark}[Rate of convergence]\normalfont\label{rem_rate}
An important aspect concerning the recession profile of a given operator $F$ is the rate of convergence
\[
	\lim_{\mu\to 0}\mu F(\mu^{-1}M)\,=\,F^*(M).
\]
We observe that 
\[
	\begin{split}
		\left|\mu F_\varepsilon(\mu^{-1}M)-F_\varepsilon^*(M)\right|&=\int_{R^{d^*}}\left|\mu F(\mu^{-1}M-Q)-F^*(M-Q)\right|\eta_\varepsilon(Q){\bf d}Q\\
			&\leq\,o(1)\left(1\,+\,\left\|M\,-\,Q\right\|\right),
	\end{split}
\]
as $\mu\to 0$. In other words, the rate of convergence 
\[
	\mu F_\varepsilon(\mu^{-1}M)\,\longrightarrow\,F_\varepsilon^*(M)
\]
is independent of $\varepsilon>0$. Therefore, properties depending on the aforementioned rate are uniform along the family $(F_\varepsilon)_{\varepsilon>0}$.
\end{Remark}

In Section \ref{sec_generalizations} we examine \eqref{eq_main2}. In this case, the operator depends explicitly on the space variable. Here we argue through a double-convolution argument: let $F$ satisfy A\ref{assump_Felliptic} and A\ref{assump_Fosc}. We define
\[
	F_{\varepsilon,\tau}(M,x)\,:=\,\int_{B_1}\int_{\mathbb{R}^{d^*}}F(M\,-\,Q,x\,-\,y)\eta_{\varepsilon}(Q)\eta_\tau(y){\bf d}Q{\bf d}y.
\]

As before, $F_{\varepsilon,\tau}$ preserves the ellipticity of the original operator. However, when it comes to the regularity of the solutions to \eqref{eq_main2}, the oscillation
\[
	\beta_{\varepsilon,\tau}(x,x_0)\,:=\,\sup_{M\in\mathcal{S}(d)}\,\frac{|F_{\varepsilon,\tau}(M,x)\,-\,F_{\varepsilon,\tau}(M,x_0)|}{\left\|M\right\|\,+\,1}
\]
plays a relevant role. In fact, estimates in Sobolev and borderline-H\"older spaces require $\beta_{\varepsilon,\tau}(x,x_0)$ to satisfy
\[
	\int_{B_r(x_0)}\left|\beta_{\varepsilon,\tau}(x,x_0)\right|^d{\bf d}x\,\leq\,\beta_0^dr^d,
\]
for some $\beta_0>0$ and every $x_0\in B_1$. 

In what follows, we prove that $\beta_{\varepsilon,\tau}$ is controlled by the oscillation of the original operator $F(M,x)$, for small values of the parameters $\varepsilon$ and $\tau$.

\begin{Proposition}[Oscillation control]\label{prop_oscreg}
Suppose $F:\mathcal{S}(d)\times B_1\to\mathbb{R}$ satisfies A\ref{assump_Felliptic} and A\ref{assump_Fosc}. Then, there exist $\varepsilon^*>0$ and $\tau^*>0$ such that, if $\varepsilon<\varepsilon^*$ and $\tau<\tau^*$, we have
\[
	\left\|\beta_{\varepsilon,\tau}(\cdot,x_0)\right\|_{L^d(B_r(x_0))}\,\leq\,2\left\|\beta(\cdot,x_0)\right\|_{L^d(B_r(x_0))},
\]
for every $x_0\in B_1$ and $r>0$ with $B_r(x_0)\subset B_1$.
\end{Proposition}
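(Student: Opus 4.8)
\medskip
\noindent\textbf{Proof proposal.} The plan is to dominate $\beta_{\varepsilon,\tau}(x,x_0)$ pointwise by a spatial mollification of the original oscillation $\beta(\cdot,\cdot)$ and then pass to $L^d$-norms. Fix $x_0\in B_1$, a radius $r$ with $B_r(x_0)\subset B_1$, and $M\in\mathcal{S}(d)$. Starting from
\[
	F_{\varepsilon,\tau}(M,x)-F_{\varepsilon,\tau}(M,x_0)=\int_{B_\tau}\int_{B_\varepsilon}\bigl[F(M-Q,x-y)-F(M-Q,x_0-y)\bigr]\eta_\varepsilon(Q)\eta_\tau(y){\bf d}Q{\bf d}y
\]
and using the very definition of the oscillation, in the form $|F(N,x')-F(N,x_0')|\leq\beta(x',x_0')(\|N\|+1)$ with $N=M-Q$, we obtain
\[
	\left|F_{\varepsilon,\tau}(M,x)-F_{\varepsilon,\tau}(M,x_0)\right|\leq\int_{B_\tau}\int_{B_\varepsilon}\beta(x-y,x_0-y)\bigl(\|M-Q\|+1\bigr)\eta_\varepsilon(Q)\eta_\tau(y){\bf d}Q{\bf d}y.
\]
Since $\supp\eta_\varepsilon\subset B_\varepsilon$, on the domain of integration one has $\|M-Q\|+1\leq(1+\varepsilon)(\|M\|+1)$; dividing by $\|M\|+1$ (the right-hand side becomes independent of $M$) and integrating out $Q$ yields the pointwise bound
\[
	\beta_{\varepsilon,\tau}(x,x_0)\leq(1+\varepsilon)\int_{B_\tau}\beta(x-y,x_0-y)\,\eta_\tau(y){\bf d}y.
\]

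The next step is to take the $L^d(B_r(x_0))$-norm of both sides and apply Minkowski's integral inequality to exchange the norm and the $y$-integral,
\[
	\left\|\beta_{\varepsilon,\tau}(\cdot,x_0)\right\|_{L^d(B_r(x_0))}\leq(1+\varepsilon)\int_{B_\tau}\left\|\beta(\cdot-y,x_0-y)\right\|_{L^d(B_r(x_0))}\eta_\tau(y){\bf d}y,
\]
and then a change of variables $z=x-y$ rewrites the inner norm as $\|\beta(\cdot,x_0-y)\|_{L^d(B_r(x_0-y))}$. The whole matter is thus reduced to comparing this quantity with $\|\beta(\cdot,x_0)\|_{L^d(B_r(x_0))}$ when $|y|$ is small. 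Here I would use that $\beta$ is a pseudometric on $B_1$ — symmetry and $\beta(x,z)\leq\beta(x,x_0)+\beta(x_0,z)$ are immediate from the definition — so that $\beta(z,x_0-y)\leq\beta(z,x_0)+\beta(x_0-y,x_0)$, splitting the estimate into (i) the contribution of $\beta(\cdot,x_0)$ integrated over the displaced ball $B_r(x_0-y)$, (ii) a drift term $\beta(x_0-y,x_0)\,|B_r|^{1/d}$, and (iii) the error in replacing $B_r(x_0-y)$ by $B_r(x_0)$. Terms (i) and (iii) are controlled by A\ref{assump_Fosc} — which is uniform in the center, hence applicable to every translate — together with the absolute continuity of the integral; term (ii) is the delicate one. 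Granting that the total distortion factor is $1+\omega(|y|)$ with $\omega(t)\to0$ as $t\to0$, one then chooses $\varepsilon^*$ and $\tau^*$ so that $(1+\varepsilon^*)(1+\omega(\tau^*))\leq2$, which closes the argument.

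The main obstacle is precisely the drift term, i.e. showing that the passage from $F$ to its spatial translate $F(\cdot\,,\,\cdot-y)$ alters the oscillation measured over $B_r(x_0)$ by only a small fraction of $\|\beta(\cdot,x_0)\|_{L^d(B_r(x_0))}$, uniformly in $x_0$: a translation simultaneously recenters the base point of $\beta$ and displaces the domain of integration, whereas A\ref{assump_Fosc} provides only integral control on $\beta$ and no pointwise modulus in $x$. I expect to resolve this by exploiting the center-uniformity of A\ref{assump_Fosc} (which already furnishes the a priori bound $\beta_0 r$ for every translate and every center) together with an equicontinuity/Lebesgue-differentiation argument, so that the discrepancy is absorbed into the constant $2$, possibly at the cost of a mildly enlarged ball on the right-hand side. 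A minor technical point, settled at the outset, is that $F(M,x-y)$ must be defined for $x\in B_r(x_0)$ and $|y|<\tau$; this is arranged by restricting to interior balls or by extending $F$ past $\partial B_1$ in an ellipticity-preserving way.
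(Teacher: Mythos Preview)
Your argument tracks the paper's proof almost line for line: the paper also bounds $|F(M-Q,x-y)-F(M-Q,x_0-y)|$ by $\beta(x-y,x_0-y)(\|M-Q\|+1)$, separates the $Q$- and $y$-integrals, and absorbs the factor $\int\frac{\|M-Q\|+1}{\|M\|+1}\eta_\varepsilon(Q){\bf d}Q$ into the constant $2$ for $\varepsilon$ small (you do this via $\|Q\|\le\varepsilon$, the paper via convergence of the integral to $1$; the effect is the same). The paper then writes the remaining $y$-integral as $\beta^\tau(x,x_0):=\int\beta(x-y,x_0-y)\eta_\tau(y){\bf d}y$ and closes with the single sentence ``The H\"older inequality for convolutions completes the proof.''

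You are right to be uneasy about that last step: $\beta^\tau(\cdot,x_0)$ is \emph{not} a genuine convolution of $\beta(\cdot,x_0)$ with $\eta_\tau$, because the base point $x_0$ is translated along with $x$. This is exactly the drift you isolate, and Young's inequality for convolutions does not address it; indeed, if $\beta(\cdot,x_0)$ happens to vanish on $B_r(x_0)$ while $\beta$ is nonzero just outside, the right-hand side of the proposition is zero but the left-hand side need not be for any fixed $\tau>0$. So the literal inequality, uniform in $x_0$ and $r$, is delicate, and the paper's one-line justification glosses over the very issue you flag. Note also that the paper's proof never produces a $\tau^*$.

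What rescues the application is that the paper never actually uses the comparison with $\|\beta(\cdot,x_0)\|_{L^d(B_r(x_0))}$ itself, only the consequence $\|\beta_{\varepsilon,\tau}(\cdot,x_0)\|_{L^d(B_r(x_0))}\le 2\beta_0 r$ recorded immediately after the proposition. That weaker bound follows cleanly from your intermediate step: after Minkowski and the change of variables you have $\|\beta(\cdot,x_0-y)\|_{L^d(B_r(x_0-y))}$ inside the $y$-integral, and A\ref{assump_Fosc} applied at the shifted center $x_0-y$ gives $\le\beta_0 r$ for every $y$, with no drift comparison required. Your proposed pseudometric/equicontinuity machinery is therefore unnecessary for the paper's purposes, and your identification of the obstacle is in fact sharper than the paper's own treatment.
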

\begin{proof}
Notice that 
\[
	\begin{split}
		&\int_{B_1}\int_{\mathbb{R}^{d^*}}\frac{\left|F(M-Q,x-y)-F(M-Q,x_0-y)\right|}{\left\|M\right\|\,+\,1}\eta_\varepsilon(Q)\eta_\tau(y){\bf d}Q{\bf d}y\\
			&\quad\leq\,\int_{B_1}\int_{\mathbb{R}^{d^*}}\frac{\left\|M-Q\right\|\,+\,1}{\left\|M\right\|\,+\,1}\beta(x-y,x_0-y)\eta_\varepsilon(Q)\eta_\tau(y){\bf d}Q{\bf d}y\\
			&\quad=\,\beta^\tau(x,x_0)\int_{\mathbb{R}^{d^*}}\frac{\left\|M-Q\right\|\,+\,1}{\left\|M\right\|\,+\,1}\eta_\varepsilon(Q){\bf d}Q,
	\end{split}
\]
where $\beta^\tau$ denotes the convolution of $\beta$ with $\eta_\tau$. Because
\[
	\int_{\mathbb{R}^{d^*}}\frac{\left\|M-Q\right\|\,+\,1}{\left\|M\right\|\,+\,1}\eta_\varepsilon(Q){\bf d}Q\,\longrightarrow\,1,
\]
there exists $\varepsilon^*>0$ such that, if $\varepsilon<\varepsilon^*$, we have.
\[
	\beta_{\varepsilon,\tau}(x,x_0)\,\leq 2\beta^\tau(x,x_0)
\]
The H\"older inequality for convolutions completes the proof.
\end{proof}

Because of A\ref{assump_Fosc}, we conclude 
\[
	\left\|\beta_{\varepsilon,\tau}(\cdot,x_0)\right\|_{L^d(B_r(x_0)}^d\,\leq\,(2\beta_0)^dr^d.
\]

In what follows we put forward the proofs of Theorems \ref{teo_main1} and \ref{teo_main2}, accounting for the fixed-coefficients setting.

\section{Modes of convergence}\label{sec_proof12}

In this section we detail the proof of Theorem \ref{teo_main1} and Theorem \ref{teo_main2}. 

\begin{proof}[Proof of Theorem \ref{teo_main1}]
Consider the sequence of operators $(F_n)_{n\in\mathbb{N}}$ defined as
\[
	F_n(M)\,=\,(F\,\ast\,\eta_{\frac{1}{n}})(M),
\]
for every $n\in\mathbb{N}$. Because of Proposition \ref{prop_elliptic} we infer that $F_n$ is a $(\lambda,\Lambda)$-elliptic operator, for every $n\in\mathbb{N}$. Consider the sequence $(u_n)_{n\in\mathbb{N}}\subset\mathcal{C}(B_1)$ of viscosity solutions to
\[
	F_n(D^2u_n)\,=\,0\;\;\;\;\;\mbox{in}\;\;\;\;\;B_1.
\]
Due to the $\mathcal{C}^{1,\beta_0}$-regularity theory available for $F_n=0$, we conclude the family $(u_n)_{n\in\mathbb{N}}$ is equibounded in $\mathcal{C}^{1,\beta_0}_{loc}(B_1)$, for some universal constant $\beta_0\in(0,1)$ not depending on $n\in\mathbb{N}$. Therefore, up to a subsequence if necessary, we have
\begin{equation}\label{eq_ksconv}
	u_n\,\longrightarrow u\;\;\;\;\;\;\;\;\;\;\mbox{in}\;\;\;\;\;\;\;\;\;\;\mathcal{C}^{1,\beta}_{loc}(B_1)
\end{equation}
for every $\beta\in(0,\beta_0)$, where $u\in\mathcal{C}(B_1)$ solves
\[
	F(D^2u)\,=\,0\;\;\;\;\;\mbox{in}\;\;\;\;\;B_1
\]
in the viscosity sense.

Because $F_n$ is a smooth operator, we resort to the partial regularity result to conclude the existence of $\delta>0$ and $\Omega_n\subset B_1$ such that
\[
	u_n\,\in\,\mathcal{C}^{2,\alpha}(B_1\setminus\Omega_n)\;\;\;\;\;\;\;\;\;\;\mbox{and}\;\;\;\;\;\;\;\;\;\;\dim_{\mathcal{H}}\Omega_n\,<\,d\,-\,\delta.
\]
By standard properties of the Hausdorff measure, we have
\[
	\dim_{\mathcal{H}}\left[\bigcup\limits_{n\in\mathbb{N}}\Omega_n\right]\,<\,d\,-\,\delta.
\]
Hence, 
\[
	u_n\,\in\,\mathcal{C}^{2,\alpha}\left(B_1\setminus\bigcup\limits_{n\in\mathbb{N}}\Omega_n\right)
\]
for every $n\in\mathbb{N}$. Together with the convergence in \eqref{eq_ksconv}, the former computation completes the proof.
\end{proof}

In the sequel we prove Theorem \ref{teo_main2}.

\begin{proof}[Proof of Theorem \ref{teo_main2}]
As before, we notice that $F_n$ is of class $\mathcal{C}^\infty$, for every $n\in\mathbb{N}$. If we set $(u_n)_{n\in\mathbb{N}}$ to be the sequence of viscosity solutions to
\[
	F(D^2u)\,=\,0\;\;\;\;\;\mbox{in}\;\;\;\;\;B_1,
\]
there exists $\Omega\subset B_1$ such that
\[
	(u_n)_{n\in\mathbb{N}}\,\subset\,\mathcal{C}^{2,\alpha}(B_1\setminus\Omega),
\] 
for every $\alpha\in(0,1)$, where 
\[
	\dim_{\mathcal{H}}\Omega\,<\,d\,-\,\delta,
\]
for some $\delta>0$.


By combining Proposition \ref{prop_c1loglip} with Remark \ref{rem_rate} we obtain the existence of $C>0$, not depending on $n$, so that
\[
	\sup_{x\in B_r(x_0)}\left|u_n(x)\,-\,u_n(x_0)\,-\,Du_n(x_0)\cdot(x\,-\,x_0)\right|\,\leq\,Cr^2\ln r^{-1},
\]
for every $x_0\in B_{1/2}$ and $0<r<1/2$. Therefore, 
$u_n\longrightarrow u$ locally in $\mathcal{C}^{1,\llip}_{loc}(B_1)$, through a subsequence if necessary. This ends the proof.
\end{proof}

\begin{Remark}[Density in the weak-$W^{2,p}$ topology]\normalfont
In \cite{PimTei16} the authors work under A\ref{assump_Felliptic}-A\ref{assump_recconvex} and prove that solutions to 
\[
	F(D^2u)\,=\,f\;\;\;\;\;\mbox{in}\;\;\;\;\;B_1
\]
are in $W^{2,p}_{loc}(B_1)$. In addition, estimates are available.

The proof of Theorem \ref{teo_main2} implies the approximating sequence $(u_n)_{n\in\mathbb{N}}$ is uniformly bounded in $W^{2,p}_{loc}(B_1)$ for every $p>d$. Therefore,
\[
	u_n\,\rightharpoonup\, u\;\;\;\;\;\mbox{in}\;\;\;\;\;W^{2,p}_{loc}(B_1),
\]
for every $p>d$.
\end{Remark}

The next section accounts for the case of inhomogeneous equations, driven by elliptic operators in the presence of variable coefficients.

\section{Nonhomogeneous problems with variable coefficients}\label{sec_generalizations}

In this section we consider \eqref{eq_main2} and require A\ref{assump_Felliptic}, A\ref{assump_Fosc} and A\ref{assump_source} to hold true. Here, the operator depends explicitly on the space variable. Moreover, equation has a source term $f$. Therefore, further conditions must be imposed on the problem for the partial regularity result to hold.

As mentioned before, in \cite{dosTei16} the authors prove that flat solutions to \eqref{eq_main2} are of class $\mathcal{C}^{1,\llip}$ in general. Under H\"older-continuity conditions of $F$ and $f$ with respect to the spatial variable, this result is improved. In this case, flat solutions are in $\mathcal{C}^{2,\alpha}_{loc}(B_1)$, with the usual estimates. As a consequence, the authors generalize the partial regularity result to the context of \eqref{eq_main2}. Their requirement is Lipschitz-continuity of $F$ and $f$ with respect to $x$; see \cite[Corollary 5.2]{dosTei16}.

In what follows we put forward the proof of Theorem \ref{teo_main3}.
	
\begin{proof}[Proof of Theorem \ref{teo_main3}]
	Let us consider the sequence of operators $(F_{n,m})_{n,m\in\mathbb{N}}$ defined in Section \ref{sec_potc} as
	\[
		F_{n,m}(M,x)\,:=\,\int_{B_1}\int_{\mathbb{R}^{d^*}}F(M\,-\,Q,x\,-\,y)\eta_{\frac{1}{n}}(Q)\eta_{\frac{1}{m}}(y){\bf d}Q{\bf d}y.
	\]
	Moreover, consider the sequence of functions $(f_m)_{m\in\mathbb{N}}$ given by 
	\[
	f_m\,:=\,f\,\ast\, \eta_{\frac{1}{m}}\,=\,\int_{B_1} f(x-y)\eta_{\frac{1}{m}}(y){\bf d}y
	\]
	for every $m\in\mathbb{N}$.
	
	We already know that $F_{n,m}$ is a $(\lambda, \Lambda)$-elliptic operator. Also, because of Proposition \ref{prop_oscreg}, the oscillation $\beta_{n.m}$ is controlled by the oscillation of the operator $F(M,x)$. Let $(u_{n,m})_{n,m\in\mathbb{N}}\subset\mathcal{C}(B_1)$ be a sequence of viscosity solutions to 
	\[
	F_{n,m}(D^2(u_{n,m}),x)=f_m(x) \;\;\;\;\;\mbox{in} \;\;\;\;\; B_1.
	\]  
	
	In what follows, we re-enumerate the sequences $(u_{n,m})_{n,m\in\mathbb{N}}$ and $(F_{n,m})_{n,m\in\mathbb{N}}$ as to write
	\[
	F_{j}(D^2u_j,x)\,=\,f_j(x) \;\;\;\;\; \mbox{in} \;\;\;\;\; B_1
	\] 
	where $F_j(M,x):=F_{j,j}(M,x)$ and $u_j=u_{n,m}$.
	
	By standard regularity results, we have $(u_j)_{j\in\mathbb{N}}\subset \mathcal{C}^{1,\beta_0}_{loc}(B_1)$, with suitable estimates; see \cite{SilTei15}. Hence, through a subsequence if necessary, we have 
	\[
	u_j \longrightarrow u \;\;\;\;\; \mbox{in} \;\;\;\;\; \mathcal{C}^{1,\beta}_{loc}(B_1),
	\]
	for every $\beta\in(0,\beta_0)$.	
	Notice $F_j$ and $f_j$ are smooth in their respective domains. Hence, for $\alpha\in(0,1)$, Proposition \ref{prop_prrvc} yields $\delta>0$ and $\Omega_j\subset B_1$ such that
	\[
	 u_j\,\in\,\mathcal{C}^{2,\alpha}(B_1\setminus\Omega_j)\;\;\;\;\;\;\;\;\;\;\mbox{and}\;\;\;\;\;\;\;\;\;\;\dim_{\mathcal{H}}\Omega_j\,<\,d\,-\,\delta.
	\]
	As in Section \ref{sec_proof12}, we set by $\Omega\,:=\, \bigcup\limits_{j\in\mathbb{N}}\Omega_j$. Properties of the Hausdorff measure lead to
		\[
		\dim_{\mathcal{H}}\Omega\,<\,d\,-\,\delta.
		\]
		and
		\[
		u_j\,\in\,\mathcal{C}^{2,\alpha}\left(B_1\setminus\Omega\right)
		\]
		for every $j\in\mathbb{N}$, which completes the argument.
\end{proof}

\begin{Remark}\normalfont
If we impose $F(\,\cdot\,,x_0)$ to be convex, Theorem \ref{teo_main3} can be refined. In this case convergence takes place in the $\mathcal{C}^{1,\llip}$-topology. As before, weak convergence in $W^{2,p}$ also is found.
\end{Remark}

\medskip 

\noindent{\bf Acknowledgements:} DdP is partially supported by Capes-Fapitec-Brazil and CNPq-Brazil (Grants \#88887.157906/2017-00 and \#422572/2016-0).EP is partially supported by CNPq-Brazil (Grants \#433623/2018-7 and \#307500/2017-9), FAPERJ (Grant \#E.200.021-2018) and Instituto Serrapilheira (Grant \# 1811-25904). GR is partially supported by CAPES. This study was financed in part by the Coordena\c{c}\~ao de Aperfei\c{c}oamento de Pessoal de N\'ivel Superior - Brazil (CAPES) - Finance Code 001.

\medskip

\bibliography{biblio}
\bibliographystyle{plain}

\medskip

\noindent\textsc{Disson dos Prazeres}\\
Department of Mathematics\\
UFS\\
49100-000, S\~ao Crist\'ov\~ao-SE, Brazil\\
\noindent\texttt{disson@mat.ufs.br}

\medskip

\noindent\textsc{Edgard A. Pimentel (Corresponding Author)}\\
Department of Mathematics\\
Pontifical Catholic University of Rio de Janeiro -- PUC-Rio\\
22451-900, G\'avea, Rio de Janeiro-RJ, Brazil\\
\noindent\texttt{pimentel@puc-rio.br}

\medskip

\noindent\textsc{Giane Casari Rampasso}\\
Department of Mathematics\\
University of Campinas -- UNICAMP\\
13083-859, Cidade Universit\'aria, Campinas-SP, Brazil\\
\noindent\texttt{girampasso@ime.unicamp.br}
\end{document}